\theoremstyle{theorem}
\newtheorem{corollary}{Corollary}
\newtheorem{lemma}[corollary]{Lemma}
\newtheorem{lemma*}[lem6]{Lemma}
\newtheorem{theorem}[corollary]{Theorem}
\begin{document}

\AtEndDocument{%
  \par
  \medskip
  \begin{tabular}{@{}l@{}}%
    \textsc{Emanuel Juliano} \\
    \textsc{Dept. of Computer Science} \\ 
    \textsc{Universidade Federal de Minas Gerais, Brazil} \\
    \textit{E-mail address}: \texttt{emanuelsilva@dcc.ufmg.br}
    
  \end{tabular}}

\title{Forbidden subdivision in integral trees}
\author{Emanuel Juliano\footnote{emanuelsilva@dcc.ufmg.br --- remaining affiliations in the end of the manuscript.}}
\date{\today}
\maketitle
\vspace{-0.8cm}

\begin{abstract} 
We show that if all the eigenvalues of a tree are integers, then it does not contain a subdivided edge with 7 vertices.
\end{abstract}

\begin{center}
\textbf{Keywords}
Integral Trees ; Subdivision
\end{center}

\section{Introduction}\label{intro}

An integral tree is a tree for which the eigenvalues of its adjacency matrix are all integers \cite{harary1974graphs}. Many constructions of integral trees have been found lately \cite{csikvari2010integral}. However, all known constructions and examples of integral trees do not contain a subdivided edge with more than 3 vertices, indicating that this structure might forbid the integrality of the spectrum.

Let $T$ be a tree and $P_{a, b}$ denote the set of vertices in the path between $a$ and $b$ in $T$. We say $P_{a, b}$ is a subdivided edge if all inner vertices of the path have degree $2$. Coutinho et al. \cite{coutinho2023spectrum} show that no integral tree contains a subdivided edge with $8$ vertices; we reduce this size to $7$.

\begin{theorem}\label{thm:1}
If a tree $T$ contains a subdivided edge with $7$ vertices, then the tree has at least one eigenvalue that is not an integer.
\end{theorem}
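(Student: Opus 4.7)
The plan is to reduce to an algebraic identity for $\phi(T,x)$ (the characteristic polynomial of $T$) forced by the presence of the subdivided edge, and then argue by contradiction. I would label the seven consecutive vertices on the subdivided edge as $u_1, u_2, \ldots, u_7$, so that $u_2, \ldots, u_6$ are the degree-$2$ interior vertices. Let $L$ and $R$ be the connected components of $u_1$ and $u_7$ in $T - u_1u_2$ and $T - u_6u_7$ respectively, and set
\[
\alpha = \phi(L,x),\ \ \beta = \phi(L-u_1,x),\ \ \gamma = \phi(R,x),\ \ \delta = \phi(R-u_7,x).
\]
Writing $p_k(x)$ for the characteristic polynomial of the path on $k$ vertices (so $p_0 = 1$, $p_1 = x$, $p_k = xp_{k-1}-p_{k-2}$), an iterated application of the standard bridge formula along the interior path $u_2u_3u_4u_5u_6$ yields the identity
\[
\phi(T,x) = \alpha\gamma\,p_5(x) - (\alpha\delta + \beta\gamma)\,p_4(x) + \beta\delta\,p_3(x).
\]

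The key algebraic input is the factorisation $p_5(x) = x(x-1)(x+1)(x^2 - 3)$, which singles out the irrational roots $\pm\sqrt{3}$; a direct computation gives $p_4(\pm\sqrt{3}) = 1$ and $p_3(\pm\sqrt{3}) = \pm\sqrt{3}$. Substituting $x = \sqrt{3}$ collapses the identity to
\[
\phi(T,\sqrt{3}) = -(\alpha\delta + \beta\gamma)(\sqrt{3}) + \sqrt{3}\,\beta(\sqrt{3})\delta(\sqrt{3}) \in \mathbb{Z}[\sqrt{3}].
\]
If $T$ were integral, then $\phi(T,x) = \prod_i(x - m_i)$ with $m_i \in \mathbb{Z}$, so $\phi(T,\sqrt{3}) = \prod_i(\sqrt{3} - m_i)$ and its norm $\phi(T,\sqrt{3})\phi(T,-\sqrt{3}) = \prod_i(3 - m_i^2)$ is a nonzero integer (nonzero because $\sqrt{3}\notin\mathbb{Z}$). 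Equating the structural and eigenvalue-theoretic descriptions of $\phi(T,\pm\sqrt{3})$ produces an explicit Diophantine relation linking the integer eigenvalues of $T$ to the residues of $\alpha,\beta,\gamma,\delta$ modulo $x^2 - 3$.

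To close the argument I would couple this with the companion evaluations of the same identity at the remaining roots of $p_5$, namely $x \in \{0, \pm 1\}$, where the formula simplifies analogously into integer equations, and with the Cauchy-interlacing consequences of the induced copy of the $7$-vertex path $P_7 \subseteq T$: the eigenvalues $0, \pm\sqrt{2}, \pm\sqrt{2\pm\sqrt{2}}$ of $P_7$ force $\lambda_1(T), \lambda_2(T) \geq 2$ and $\lambda_3(T) \geq 1$, together with their bipartite mirrors. The main obstacle is extracting a genuine contradiction from these combined constraints. The analogous $8$-vertex result of Coutinho et al.\ \cite{coutinho2023spectrum} benefits from $p_6(x) = x^6 - 5x^4 + 6x^2 - 1$ being irreducible over $\mathbb{Q}$, so a single conjugation/norm computation essentially suffices; in the $7$-vertex setting, however, $p_5$ has three rational roots and the non-integer obstruction is confined to the single irreducible quadratic $x^2 - 3$. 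I therefore expect the delicate step to be a careful finite case analysis that squeezes a contradiction out of this one quadratic relation, supported by the evaluations at $0, \pm 1$ and the integer bounds on the top eigenvalues supplied by interlacing.
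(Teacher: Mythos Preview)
Your setup is correct as far as it goes: the bridge-formula identity
\[
\phi(T,x)=\alpha\gamma\,p_5(x)-(\alpha\delta+\beta\gamma)\,p_4(x)+\beta\delta\,p_3(x)
\]
holds, and the evaluations $p_5(\sqrt 3)=0$, $p_4(\sqrt 3)=1$, $p_3(\sqrt 3)=\sqrt 3$ are right. But the proposal is not a proof: you explicitly defer the only hard step (``extracting a genuine contradiction'') to an unspecified ``careful finite case analysis'', and nothing you have written constrains the values $\alpha(\sqrt 3),\beta(\sqrt 3),\gamma(\sqrt 3),\delta(\sqrt 3)\in\mathbb Z[\sqrt 3]$ in any useful way. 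Both sides of your displayed equation at $x=\sqrt 3$ are simply elements of $\mathbb Z[\sqrt 3]$, and the additional evaluations at $x\in\{0,\pm 1\}$ just produce integer identities that any choice of subtrees $L,R$ will satisfy tautologically. The interlacing facts you cite (\,$\lambda_1,\lambda_2\ge 2$, $\lambda_3\ge 1$\,) are correct but too weak to force anything against a single quadratic relation. So as it stands there is a genuine gap: the entire contradiction step is missing, and there is no indication of what the contradiction would even be.

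Your justification by analogy is also based on a false premise. The $8$-vertex result of Coutinho et al.\ is \emph{not} obtained via irreducibility of $p_6$ and a norm computation; it is proved by the Jacobs--Trevisan diagonalisation framework, tracking the signs of the rational functions $d_i(x)=\phi^{T(i)}/\phi^{T(i)\setminus i}$ along the subdivided edge. The present paper uses exactly that method and sharpens it: one shows that as $\theta$ runs over $(-2,2)$ the count of nonnegative $d_i$'s along the path must increase from at most $2$ (at $\theta=-2$, by oddness) to at least $n-2$ (at $\theta=2$), that each increase forces a zero of $d_1$ and hence a distinct eigenvalue of $T$ in $(-2,2)$, and --- this is the new ingredient for $n=7$ --- that a separate dichotomy on the behaviour of $d_n$ (either a pole in $(-2,2)$, or $d_n(2)\ge 2$) produces a fourth zero. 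Four distinct eigenvalues in the open interval $(-2,2)$ cannot all lie in $\{-1,0,1\}$, which is the contradiction. None of this is visible from your polynomial-evaluation approach.
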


We assume our tree $T$ has the following format:
\begin{figure}[H]
\begin{center}
	\includegraphics[width=10cm]{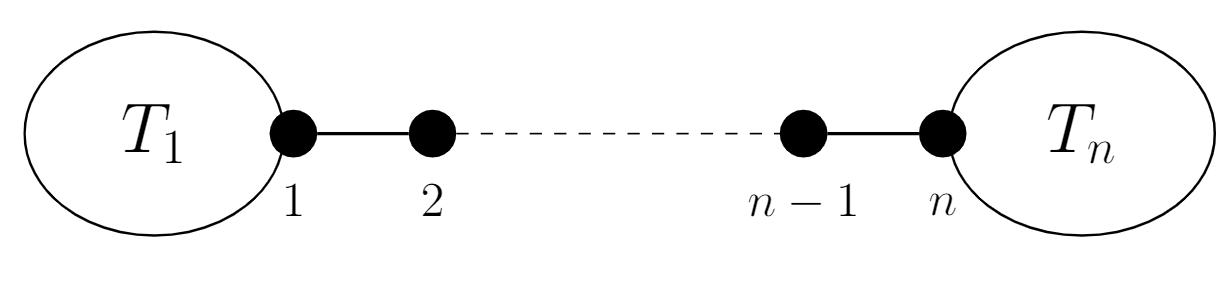}
	\caption{A tree that contains two vertices separated by a subdivided edge with $n$ vertices.} \label{fig:treepath}
\end{center}
\end{figure}

\section{Proof of Theorem}

Similarly to the argument used in \cite{coutinho2023spectrum}, we make use of an alternative interpretation of the algorithm developed by Jacobs and Trevisan \cite{TrevisanJacobsOriginal} to locate eigenvalues of trees.

Assume $T$ is a rooted tree and make vertex $1$ the root. For any vertex $i$ of $T$, let $T(i)$ denote the subtree of vertex $i$, that is, the induced subgraph corresponding to vertex $i$ and all its descendants. We define the rational function on the variable $x$ by
 \begin{equation}\label{eq:algorithm}
 	d_i = x - \sum_{j \text{ child of }i} \frac{1}{d_j}.
 \end{equation}

% \begin{theorem}[Theorem 17 from \cite{coutinho2023spectrum}] \label{thm:alg}
% For $x = \theta$, the number of vertices $i$ so that $d_i(\theta) \geq 0$ is equal to the number of eigenvalues of $T$ in the interval $(-\infty,\theta]$, counted with multiplicity.
% \end{theorem}

In order to use $d_i$ to compute the distinct eigenvalues of $T$ within an interval, we employ the following lemma. Let $\phi^G$ denote the characteristic polynomial of the adjacency matrix $A(G)$ of a graph $G$.

\begin{lemma}
Let $T$ be a rooted tree,
    \begin{equation} \label{eq:alpha}
    d_i = \frac{\phi^{T(i)}}{\phi^{T(i) \setminus i}}.
    \end{equation}
\end{lemma}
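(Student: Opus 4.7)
My plan is to prove the identity by induction on the height of $T(i)$, with the cofactor recurrence for the characteristic polynomial of a tree as the main algebraic tool. Since $T(i)$ is a tree, the Laplace expansion of $\det(xI-A(T(i)))$ along the row and column indexed by $i$ collapses (no cycles pass through $i$) to the well-known identity
\[
\phi^{T(i)}(x) \;=\; x\,\phi^{T(i)\setminus i}(x) \;-\; \sum_{j\text{ child of }i} \phi^{T(i)\setminus\{i,j\}}(x).
\]
I would either quote this as Schwenk's formula for trees or derive it in a line from the cofactor expansion.

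For the base case, when $i$ is a leaf, $T(i)$ consists of the single vertex $i$, so $\phi^{T(i)} = x$ and $\phi^{T(i)\setminus i}=1$; their ratio is $x$, which matches $d_i=x$ (the empty-sum case of the recursive definition).

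For the inductive step, let $j_1,\dots,j_k$ be the children of $i$. The subtrees $T(j_l)$ are pairwise vertex-disjoint, so the characteristic polynomial is multiplicative across components: $\phi^{T(i)\setminus i} = \prod_{l}\phi^{T(j_l)}$ and $\phi^{T(i)\setminus\{i,j_l\}} = \phi^{T(j_l)\setminus j_l}\prod_{m\ne l}\phi^{T(j_m)}$. Dividing the cofactor recurrence above by $\phi^{T(i)\setminus i}$ then gives
\[
\frac{\phi^{T(i)}}{\phi^{T(i)\setminus i}} \;=\; x \;-\; \sum_{l=1}^{k}\frac{\phi^{T(j_l)\setminus j_l}}{\phi^{T(j_l)}},
\]
and the inductive hypothesis $1/d_{j_l} = \phi^{T(j_l)\setminus j_l}/\phi^{T(j_l)}$ rewrites the right-hand side as the definition of $d_i$.

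I do not expect a substantive obstacle; the argument is essentially a bookkeeping induction. The one subtle point worth flagging is that \eqref{eq:algorithm} should be understood as an equality in the field $\mathbb{Q}(x)$, so that division by $d_{j_l}$ is always meaningful even though a particular $d_{j_l}$ may vanish or blow up at certain values of $x$; the inductive identification of $d_{j_l}$ with a ratio of polynomials is precisely what justifies this reading.
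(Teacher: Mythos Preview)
Your proof is correct and follows essentially the same approach as the paper: induction with the base case of a leaf, derivation of the recurrence $\phi^{T(i)} = x\,\phi^{T(i)\setminus i} - \sum_{j}\phi^{T(i)\setminus\{i,j\}}$, and then division by $\phi^{T(i)\setminus i}$ combined with the inductive hypothesis. The only cosmetic difference is that the paper obtains this recurrence by invoking the Leibniz permutation expansion (grouping permutations by whether they fix $i$ or swap it with a child), whereas you cite the Laplace/Schwenk cofactor expansion and spell out the multiplicativity over the disjoint subtrees $T(j_l)$ more explicitly.
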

\begin{proof}
The identity holds true if $|T(i)| = 1$. To demonstrate the general case, we apply induction and analyze the Leibniz determinant formula. Since we are computing the determinant of a tree, the only permutations that contribute are those consisting of disjoint transpositions corresponding to edges of $T(i)$. Let $\mathcal{S}$ be the set of such permutations. Then,
\begin{equation*}
\begin{split}
\phi^{T(i)} &= \det(xI - A(T(i))) \\
&= x \sum_{\substack{\sigma \in \mathcal{S} \\
\sigma \text{ fixes } i}} \text{sgn}(\sigma) \prod_{k \in T(i) \setminus i}   (xI - A(T(i)))_{k, \sigma_k} \\ 
& \quad + \sum_{j \text{ child of }i} \sum_{\substack{\sigma \in \mathcal{S} \\
(i j) \in \sigma }} \text{sgn}(\sigma) \prod_{k \in T(i) \setminus \{i, j\}}   (xI - A(T(i)))_{k, \sigma_k} \\
&= x \phi^{T(i) \setminus i} - \sum_{j \text{ child of }i} \phi^{T(i) \setminus \{i, j\}} \\
&= x \phi^{T(i) \setminus i} - \sum_{j \text{ child of }i} \frac{\phi^{T(i) \setminus i}}{d_j}.
\end{split}
\end{equation*}
\end{proof}

The above lemma implies that if $d_i(\theta) = 0$, then $\theta$ is an eigenvalue of $T(i)$. Therefore, to lower bound the number of eigenvalues of $T$ within an interval, it suffices to determine how many times $d_i$ becomes equal to 0.

We also use the following analytical properties of $d_i$.

\begin{lemma}\label{lem:di}
    The function $d_i$ is odd and its derivative $d_i'(\theta)$ is greater than or equal to $1$ for every $\theta$ that is not a pole of $d_i$.
\end{lemma}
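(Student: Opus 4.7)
The plan is to prove both properties by induction on the size of $T(i)$, since $d_i$ is defined recursively from the $d_j$ of its children via equation \eqref{eq:algorithm}.

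For the base case, I would take $i$ to be a leaf, in which case the sum in \eqref{eq:algorithm} is empty and $d_i = x$. This function is odd, and its derivative equals $1$ everywhere, so both claims hold trivially.

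For the inductive step, I would assume that for every child $j$ of $i$, the function $d_j$ is odd and satisfies $d_j'(\theta) \geq 1$ wherever defined. To see that $d_i$ is odd, I would substitute $-x$ and use the oddness of each $d_j$:
\begin{equation*}
d_i(-x) = -x - \sum_{j \text{ child of }i} \frac{1}{d_j(-x)} = -x - \sum_{j \text{ child of }i} \frac{1}{-d_j(x)} = -\Bigl(x - \sum_{j \text{ child of }i} \frac{1}{d_j(x)}\Bigr) = -d_i(x).
\end{equation*}
For the derivative bound, differentiating \eqref{eq:algorithm} gives
\begin{equation*}
d_i'(\theta) = 1 + \sum_{j \text{ child of }i} \frac{d_j'(\theta)}{d_j(\theta)^2},
\end{equation*}
and at any $\theta$ that is not a pole of $d_i$, each $d_j(\theta)$ is nonzero and finite, so $d_j(\theta)^2 > 0$; combined with $d_j'(\theta) \geq 1 > 0$ from the inductive hypothesis, every term in the sum is nonnegative, yielding $d_i'(\theta) \geq 1$.

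There is really no substantial obstacle here: the main thing to be careful about is bookkeeping for where the functions are defined. One should note that the poles of $d_i$ are exactly the zeros of the $d_j$'s together with the points where the $d_j$'s themselves have poles, and that off this finite set the identity for $d_i'$ above is a genuine equality of real numbers, making the inductive comparison legitimate.
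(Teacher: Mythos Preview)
Your proof is correct and follows essentially the same approach as the paper: induction with the leaf as base case, then the oddness check by substitution and the derivative bound by differentiating \eqref{eq:algorithm} termwise. One small inaccuracy in your closing remark: a pole of some $d_j$ is \emph{not} a pole of $d_i$ (since $1/d_j$ extends smoothly to $0$ there), so your description of the pole set is too large; this does not harm the argument, though it means that to cover all non-poles of $d_i$ you should also note that the inequality extends to such points by continuity.
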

\begin{proof}
    These assertions can be shown by induction on the child vertices. For a leaf, $d_i = x$ and satisfies the statement. We check that the function is odd:
\begin{equation*}
    d_i(-\theta) = -\theta - \sum_{j \text{ child of }i} \frac{1}{d_j(-\theta)} = -\theta - \sum_{j \text{ child of }i} \frac{1}{-d_j(\theta)} = -d_i(\theta),
\end{equation*}
and has derivative $\geq 1$:
\begin{equation*}
    d_i'(\theta) = 1 - \sum_{j \text{ child of }i} \left(\frac{1}{d_j(\theta)}\right)' = 1 + \sum_{j \text{ child of }i} d_j(\theta)^{-2} d_j'(\theta) \geq 1.
\end{equation*}
Where the last inequality follows from induction on $d_j'(\theta)$ and the fact that $d_j(\theta)^{-2} \geq 0$ for every $\theta$ that is not a pole of $d_i$. 
\end{proof}

Let $T$ be a tree as described in figure \ref{fig:treepath}, and assume $n=7$. We aim to demonstrate that there are too many distinct eigenvalues in the interval $(-2,2)$, such that at least one is not an integer. Let vertex $1$ be the root. We utilize the same observation as in \cite{coutinho2023spectrum} to initially bound the number of distinct eigenvalues:

\begin{itemize}    
    \item Make $\theta = 2$. If $i>1$, then
    	\[d_i(2) = 2-\frac{1}{d_{i+1}(2)},\] 
    So once there is a negative value or a zero on the path, all the remaining values going towards the root become positive or poles, as $2-1/x$ maps $[1,\infty]$ to itself. Therefore, the number of positive values or poles in the path is at least $n-2$ (at most one negative or one zero, and we cannot control what happens at vertex $1$).
    \item Make $\theta = -2$. Since the function is odd, there can be at most $2$ positive values or poles in the path.
\end{itemize}

\begin{proof}[proof of Theorem \ref{thm:1}]
    Our goal is to count how many times the root becomes equal to $0$ in the interval $(-2, 2)$, as this also determines the number of distinct eigenvalues in the interval.
    
    To achieve this, we examine the dynamics of the sign changes inside the path: enumerate the vertices of the path from $1$ to $n$. Suppose that $d_j(\theta) = 0$ for some $\theta \in (-2, 2)$. Then $d_{j-1}(\theta) = \infty$, and so, choosing $\epsilon$ sufficiently small, we have $\theta \pm \epsilon \in (-2, 2)$; and $d_{j-1}(\theta-\epsilon) > 0$, $d_{j-1}(\theta+\epsilon) < 0$, $d_{j}(\theta-\epsilon) < 0$, $d_{j}(\theta+\epsilon) > 0$. Thus the number of values $\geq 0$ within the path can only increase when the root becomes equal to $0$. But for $n=7$ we need to increase the number of values $\geq 0$ in the path $3$ times. Therefore, this counting alone is insufficient to forbid integrality. We further demonstrate that the root needs to become equal to $0$ once more, by analysing vertex $n$ this time. 
    
    Assume $d_{n}(\alpha) \leq 0$ for some $\alpha \in (0, 2]$. Since the function is odd, $d_{n}(-\alpha) \geq 0$ an $d_n$ passes through a pole in the interval $(-2, 2)$, implying that the number of positive values in the path has decreased and $d_1$ needs to become equal to $0$ once more.
    
    Now, assume $d_{n}(\alpha) \geq 0$ for all $\alpha \in (0, 2]$. In this case, $d_n(2) \geq 2$ since $d_n' \geq 1$. However, as the function $2-1/x$ maps $[1, \infty]$ to itself we actually have at least $n-1$ positive values or poles along the path when we make $\theta=2$. This implies that we need to increase the number of values $\geq 0$ in the path $5$ times instead of $3$ times.
    
    Therefore, either $d_n$ passed through a pole, or we already had enough eigenvalues in the first counting argument. In both cases, we have at least four eigenvalues of $T$ in the interval $(-2, 2)$, so at least one of then is not an integer.
\end{proof}

\section*{Acknowledgements}

The author acknowledge the financial support from CNPq and FAPEMIG.

%%%%%%%%%%%%%%%%%%%%%%%%%%%%%%%%%%%%%%%%%%%%%%%%%%%%%%%%%%%%%%%%%%%%%%%%%%%%%%%%
\bibliographystyle{plain}
\IfFileExists{references.bib}
{\bibliography{references.bib}}
{\bibliography{../references}}

%%%%%%%%%%%%%%%%%%%%%%%%%%%%%%%%%%%%%%%%%%%%%%%%%%%%%%%%%%%%%%%%%%%%%%%%%%%%%%%%
	
\end{document}